\theoremstyle{definition}
\theoremstyle{remark}
\numberwithin{equation}{section}
\newtheorem{tm}{Theorem}[section]
\newtheorem{rk}{Remark}[section]
\newtheorem{ap}{Assumption}[section]
\newtheorem{prop}{Proposition}[section]
\newtheorem{lm}{Lemma}[section]
\newtheorem{cor}{Corollary}[section]
\newcommand{\ee}{\mathbb E}
\newcommand{\pp}{\mathbb P}
\newcommand{\nn}{\mathbb N}
\newcommand{\rr}{\mathbb R}
\newcommand{\hh}{\mathbb H}
\newcommand{\CC}{\mathcal C}
\newcommand{\OO}{\mathcal O}
\newcommand{\PP}{\mathcal P}
\newcommand{\OOO}{\mathscr O}
\newcommand{\FFF}{\mathscr F}
\newcommand{\<}{\langle}
\renewcommand{\>}{\rangle}
\allowdisplaybreaks \allowdisplaybreaks[4]
\begin{document}

\linenumbers

\title[Wong--Zakai Approximations of SACE]
{Wong--Zakai Approximations of Stochastic Allen--Cahn Equation}

\author{Zhihui Liu}
\address{Department of Applied Mathematics, 
The Hong Kong Polytechnic University, Hung Hom, Kowloon,
Hong Kong}
\curraddr{}
\email{liuzhihui@lsec.cc.ac.cn}
\thanks{}
\author{Zhonghua Qiao}
\address{Department of Applied Mathematics, 
The Hong Kong Polytechnic University, Hung Hom, Kowloon,
Hong Kong}
\curraddr{}
\email{zhonghua.qiao@polyu.edu.hk}
\thanks{The second author is partially supported by Hong Kong RGC General Research Fund, No. 15325816, and the Hong Kong Polytechnic University Research Fund, No. G-YBKP}

\subjclass[2010]{Primary 60H35; 60H15, 65L60}

\keywords{stochastic Allen--Cahn equation, 
Wong--Zakai approximation,
Strong Convergence Rate}

\date{\today}

\dedicatory{}

\begin{abstract}
We establish a unconditional and optimal strong convergence rate of Wong--Zakai type approximations in Banach space norm for a parabolic stochastic partial differential equation with monotone drift, including the stochastic Allen--Cahn equation, driven by an additive Brownian sheet.
The key ingredient in the analysis is the fully use of additive nature of the noise and monotonicity of the drift to derive a priori estimation for the solution of this equation, in combination with the factorization method and stochastic calculus in martingale type 2 Banach spaces applied to deduce sharp error estimation between the exact and approximate Ornstein--Uhlenbeck processes, in Banach space norm.
\end{abstract}

\maketitle


\section{Introduction}
\label{sec1}

Consider the following parabolic stochastic partial differential equation (SPDE) driven by an additive Brown sheet $W$: 
\begin{align}\label{ac} 
\begin{split}
&\frac{\partial u(t,x)}{\partial t} 
=\frac{\partial^2 u(t,x)}{\partial x^2} +f(u(t,x))
+\frac{\partial^2 W(t,x)}{\partial t \partial x},
\quad (t,x)\in (0,T]\times (0,1),
\end{split}
\end{align}
with initial value and Dirichlet boundary conditions:
\begin{align}\label{dbc}
u(t,0)=u(t,1)=0, \
u(0,x)=u_0(x), 
\quad (t,x)\in [0,T]\times (0,1).
\end{align}
Here $f$ satisfies certain monotone and polynomial growth conditions (see Assumption \ref{ap-f}).
We remark that if $f(x)=x-x^3$, then Eq. \eqref{ac} is called the stochastic Allen--Cahn equation.
This type of stochastic equation, arisen from phase transition in materials science by stochastic perturbation such as impurities of the materials, has been extensively studied in the literatures; see, e.g., \cite{Cer03(PTRF)}, \cite{KKL07(IFB)} and \cite{Fun95(PTRF)} for one-dimensional white noises and \cite{KLL15(JAP)}, \cite{FLP14(SPDE)} and \cite{FLZ17(SINUM)} for possibly high-dimensional colored noises.

The main concern in this paper is to derive a unconditional and optimal strong convergence rate of Wong--Zakai--Galerkin approximations to simulate the Brownian sheet in Eq. \eqref{ac}.
Specifically, we simulate the space-time white noise by temporal piecewise constant approximation and then make spectral projection to this temporal approximation (see Eq. \eqref{wz}).
This type of approximations and its versions, such as spatio-temporal piecewise constant approximation, have been investigated by many researchers in mathematical and numerical settings.
See, e.g., \cite{BMS95(AOP)} and \cite{FIZ07(AOP)} for mathematical applications to support theorem in H\"older norm for parabolic SPDEs and the existence of stochastic flow for a  stochastic differential equation without Lipschitz conditions;
see, e.g., \cite{DZ02(SINUM)}, \cite{CHL17(SINUM)} and \cite{WLW18(JDE)} for numerical applications to construct Galerkin approximations for SPDEs with Lipschitz coefficients and the convergence of the Wong--Zakai approximate attractors to the original attractor of stochastic reaction-diffusion equations.

We note that the same simulation method had been used in \cite{GKN02(CMP)} for the stochastic Burgers equation, where the authors derived the strong convergence of the proposed simulation method without any algebraic rate. 
On the other hand, the authors in \cite{KKL07(IFB)} regularized the white noise by a spatio-temporal Wong--Zakai approximations and apply to a practical Monte--Carlo method combined with a Euler--Galerkin scheme for the stochastic Allen--Cahn equation.
They \cite[Theorem 4.4]{KKL07(IFB)} used a probabilistic maximum principle which leads to the assumption that $u_0 \in L^\infty(0,1)$ to prove the conditional convergence rate
\begin{align*}
\Big(\ee\Big[\chi_{\Omega_{\tau,h}} \|u-\widehat u\|^2_{L^2((0,T)\times (0,1))} \Big] \Big)^\frac12
=\OO\Big(\tau^\frac14+h/{\tau^\frac14} \Big),
\end{align*}
in a large subset $\Omega_{\tau,h}\subset \Omega$ such that 
$\pp(\Omega_{\tau,h})\rightarrow 1$ as the temporal and spatial step sizes $\tau,h$ tend to 0, where $u$ and $\widehat u$ denote the exact and Wong--Zakai approximate solution of the stochastic Allen--Cahn equation, respectively.

These problems are main motivations for this study to give a unconditional and optimal strong convergence rate of Wong--Zakai-type approximations of Eq. \eqref{ac} with monotone drift which grows polynomially.
Our approach shows that, to derive a strong convergence rate of the proposed Wong--Zakai--Galerkin approximations under the $L^\infty(0,T; L^2(\Omega; L^2(0,1)))$-norm, it is necessary to bound the exact solution and derive the strong convergence rate of the associated exact and approximate Ornstein--Uhlenbeck processes in the $L^p(0,T; L^p (\Omega; L^p (0,1)))$-norm and the $L^\infty(0,T; L^l (\Omega; L^l (0,1)))$-norm, respectively, for possibly large indices $p,l>2$ (see \eqref{emn}).
This is mainly due to the appearance of the polynomial growth in the nonlinearity and quite different from that of \cite{DZ02(SINUM)} and \cite{CHL17(SINUM)} where these authors only needed to deal with the $L^\infty(0,T; L^2(\Omega; L^2(0,1)))$-norm.

To derive the aforementioned a priori estimation for the solution of Eq. \eqref{ac}, the key ingredient in our analysis is by making fully use of the additive nature of the noise which allows the transformation of Eq. \eqref{ac} to the equivalent random partial differential equation (PDE) \eqref{z} and the monotonicity of $f$ (see Proposition \ref{moment}).
Then we combine the factorization method with stochastic calculus in martingale type 2 Banach spaces to bounded uniformly the exact and approximate Ornstein--Uhlenbeck processes and derive a sharp strong convergence rate for them in Banach setting (see Lemma \ref{ou} and Theorem \ref{ou-err}).

The main result is the following unconditional strong convergence rate of the aforementioned Wong--Zakai--Galerkin approximations applied to Eq \eqref{ac}:
\begin{align} \label{wz-err0} 
\sup_{t\in [0,T]} \Big( \ee\Big[\|u(t)-u^{m,n}(t)\|^p_{L^p(0,1)} \Big] \Big)^\frac1p
=\OO\Big[\Big(\frac1m\Big)^\frac14 \wedge \Big(\frac1n\Big)^\frac12 \Big],
\end{align}
for any $1\le p<\frac{p_*}2+1$ provided that $u_0\in L^{p_*}(\Omega; L_x^{p_*})$ (see Theorem \ref{main} and Remark \ref{rk-ac}).
Here $m$, $n$ are the number of temporal steps and dimension of spectral Galerkin space, and $u$ and $u^{m,n}$ denote the exact solution of Eq. \eqref{ac} and the Wong--Zakai--Galerkin approximate solution of Eq. \eqref{wz}, respectively.
Note that we generalize, in a separate paper \cite{LQ17-2}, the approach of the present paper in combination with new technique to derive a strong convergence rate of a fully discrete approximation for Eq. \eqref{ac} under certain regularity condition on the initial datum.

The rest of this article is organized as follows.
Some frequently used notations and preliminaries of stochastic calculus in martingale type 2 Banach setting are given in the next section, there we derive a priori estimation for the solution of Eq. \eqref{ac}.
Finally, we deduce the optimal strong convergence rate for the Wong--Zakai--Galerkin approximation \eqref{wz} of Eq. \eqref{ac} in the last section.

\section{Preliminaries}
\label{sec2}

In this section, we give some commonly used notations and preliminaries of the stochastic calculus in martingale type 2 Banach setting, as well as a priori estimation for the solution of Eq. \eqref{ac}.

\subsection{Notations and Assumption}

Let $p\ge 1$, $r\in [1,\infty]$, $q\in [2,\infty]$, $\theta\ge 0$ and $\delta,\kappa\in [0,1]$.
Here and after we denote by $L_x^q:=L_x^q(0,1)$ and $\hh:=L_x^2$.
Similarly, $L_\omega^p$ and $L_t^r$ denote the related Lebesgue spaces on $\Omega$ and $[0,T]$, respectively.
For convenience, sometimes we use the temporal, sample path and spatial mixed norm $\|\cdot\|_{L_\omega^p L_t^r L_x^q}$ in different orders, such as
\begin{align*}
\|X\|_{L_\omega^p L_t^r L_x^q}
:=\bigg(\int_\Omega \bigg(\int_0^T \bigg(\int_0^1 |u(t,x,\omega)|^q {\rm d}x\bigg)^\frac rq {\rm d}t\bigg)^\frac pr {\rm d}\pp(\omega)\bigg)^\frac 1p
\end{align*}
for $u\in L_\omega^p L_t^r L_x^q$,
with the usual modification for $r=\infty$ or $q=\infty$.

Denote by $A$ the Dirichlet Laplacian on $L_x^q$ for $q\ge 2$.
Then $A$ is the infinitesimal generator of an analytic $C_0$-semigroup $S(\cdot)$ on $L_x^q$, and thus one can define the fractional powers $(-A)^\theta$ for $\theta\in \rr$ of the self-adjoint and positive definite operator $-A$.
Let $\theta\ge 0$ and $\mathbb W_x^{\theta,q}$ be the domain of $(-A)^{\theta/2}$ equipped with the norm $\|\cdot\|_{\mathbb W_x^{\theta,q}}$ (denote $\dot \hh_x^\theta:=\mathbb W_x^{\theta,2}$ and 
$\|\cdot\|_\theta:=\|\cdot\|_{\mathbb W_x^{\theta,q}}$):
\begin{align*}
\|X\|_{\mathbb W_x^{\theta,q}}
:=\|(-A)^\frac\theta2 X\|_{L_x^q},
\quad X\in \mathbb W_x^{\theta,q}.
\end{align*}

For a Banach space $(B,\|\cdot\|_B)$ and a bounded subset 
$\OOO\subset \rr$, we use $\CC(\OOO; B)$ to denote the Banach space consisting of $B$-valued continuous functions $f$ such that
$\|f\|_{\CC(\OOO; B)}:=\sup_{x\in \OOO} \|f(x)\|_B<\infty$, and 
$\CC^\kappa(\OOO; B)$ with $\kappa\in (0,1]$ to denote the $B$-valued function $f$ such that 
\begin{align*}
\|f\|_{\CC^\kappa(\OOO; B)}
:=\sup_{x \in \OOO} \|f(x)\|_B+\sup_{x,y\in \OOO, x\neq y}\frac{\|f(x)-f(y)\|_B}{|x-y|^\kappa}<\infty.
\end{align*}
In the following we simply denote
$\CC^\kappa([0,1]; \rr)=\CC^\kappa$.
Similarly, we use $L^p(\Omega;\CC([0,T]; B))$ to denote the Banach space consisting of $B$-valued a.s. continuous stochastic processes $u=\{u(t):\ t\in [0,T]\}$ such that 
\begin{align*}
\|X\|_{L^p(\Omega;\CC([0,T]; B))}
:=\bigg(\ee\bigg[\sup_{t\in [0,T]} \|u(t)\|_B^p \bigg]\bigg)^\frac1p
<\infty,
\end{align*}
and 
$L^p(\Omega;\CC^\delta([0,T]; B))$ with $\delta\in (0,1]$ to denote $B$-valued stochastic processes $u=\{u(t):\ t\in [0,T]\}$ such that 
\begin{align*}
\|X\|_{L^p(\Omega;\CC^\delta([0,T]; B))}
: & =\bigg(\ee\bigg[\sup_{t\in [0,T]} \|u(t)\|_B^p\bigg]\bigg)^\frac1p \\
&\quad +\bigg(\ee\bigg[\bigg(\sup_{t,s\in [0,T], t\neq s}\frac{\|u(t)-u(s)\|_B}{|t-s|^\delta}\bigg)^p\bigg]\bigg)^\frac1p<\infty.
\end{align*}

Throughout we assume that the drift coefficient $f$ of Eq. \eqref{ac} satisfies the following condition.

\begin{ap} \label{ap-f}
$f$ is continuously differentiable and there exist constants $b\in \rr$, $L_f,\widetilde{L_f}\in \rr_+$ and $q\ge 2$ such that
\begin{align}
(f(x)-f(y)) (x-y) \le b |x-y|^2-L_f |x-y|^q,&
\quad x, y\in \rr;  \label{con-f} \\
 |f(0)|<\infty,\quad
|f'(x)| \le \widetilde{L_f} (1+|x|^{q-2}),&
\quad x \in \rr.  \label{con-f'}
\end{align}
\end{ap}

It is clear from \eqref{con-f'} that $f$ grows as most polynomially of order $(q-1)$ by mean value theorem:
\begin{align} \label{con-f1}
|f(x)| \le C(1+|x|^{q-1}),
\quad x\in \rr,
\end{align}
where $C=C(|f(0)|,\widetilde{L_f})$ is a positive constant.
Here and what follows we use $C$ to denote a universary constant independent of various discrete parameters which may be different in each appearance.
A motivated example of $f$ such that Assumption \ref{ap-f} holds true is a polynomial of odd order $(q-1)$ with negative leading coefficient perturbed with a Lipschitz continuous function (for the stochastic Allen--Cahn equation, $q=4$); see, e.g., \cite[Exmple 7.8]{DZ14}.

\subsection{Stochastic Calculus}

In order to apply the theory of stochastic analysis in Banach setting, we need to transform the original SPDE \eqref{ac} into an infinite dimensional stochastic evolution equation.
To this purpose, let us define $F: L_x^{q'} \rightarrow L_x^q$ by the Nymiskii operators associated with $f$, respectively:
\begin{align*}
F(u)(x):=f(u(x)), \quad u\in L_x^{q'}, ~ x\in [0,1].
\end{align*}
where $q'$ denote the conjugation of $q$, i.e., $1/q'+1/q=1$.
Then by Assumption \ref{ap-f}, 
\begin{align}  
F~\text{has a continuous extension from}~ 
L^{q'}_x ~ \text{to} ~ L^q_x, &  \quad \text{and} \label{F}  \\
_{L^{q'}_x}\<F(x)-F(y), x-y\>_{L^q_x}
\le b \|x-y\|^2-L_f \|x-y\|^q_{L^q_x},  & 
\quad x,y\in L^q_x,  \label{con-F}
\end{align}
where $_{L^{q'}_x} \<\cdot, \cdot\>_{L^q_x}$ denotes the dual between $L^{q'}_x$ and $L^q_x$.
Denote by $W_{\hh}$ the $\hh$-valued cylindrical Wiener process in a stochastic basis $(\Omega,\FFF,(\FFF_t)_{t\in [0,T]},\pp)$, i.e., there exists an orthonormal basis $\{h_k\}_{k=1}^\infty $ of $\hh$ and a sequence of mutually independent Brownian motions $\{\beta_k\}_{k=1}^\infty $ such that 
\begin{align}\label{wiener}
W_{\hh}(t)=\sum_{k=1}^\infty h_k\beta_k(t),\quad t\in [0,T].
\end{align}
Then Eq. \eqref{ac} with initial-boundary value condition \eqref{dbc} is equivalent to the following stochastic evolution equation:
\begin{align}\label{ac} \tag{SACE}
{\rm d}u(t)=(Au(t)+F(u(t))) {\rm d}t+{\rm d}W_{\hh}(t),
\ t\in (0,T];
\quad u(0)=u_0.
\end{align}

Note that for any $q\ge 2$ and $\theta\ge 0$, the function space $\mathbb W_x^{\theta,q}$ is a martingale type $2$ Banach space.
We need the following Burkholder--Davis--Gundy inequality in martingale type $2$ Banach space (see, e.g., \cite[Theorem 2.4]{Brz97(SSR)} and \cite[Section 2]{HHL17}):
\begin{align}\label{bdg}
\bigg\|\int_{0}^t \Phi(r){\rm d}W_{\hh}(r)\bigg\|_{L_\omega^p L_t^\infty L_x^q} 
\le C \big\|\Phi \big\|_{L^p(\Omega; L^2(0,T; \gamma(\hh,L_x^q)))},
\end{align}
for any $\Phi\in L^p(\Omega; L^2(0,T; \gamma(\hh,L_x^q)))$ with $p,q\ge 2$, where $\gamma(\hh,L_x^q)$ denotes the radonifying operator norm:
\begin{align*}
\|\Phi\|_{\gamma(\hh,L_x^q)}
:=\Big\|\sum_{k=1}^\infty\gamma_k \Phi h_k\Big\|_{L^2(\Omega';L_x^q)}.
\end{align*}
Here $\{h_k\}_{k=1}^\infty$ is any orthonormal basis of $\hh$ and  
$\{\gamma_n\}_{n\geq 1}$ is a sequence of independent 
$\mathcal N(0,1)$-random variables on a probability space $(\Omega',\FFF',\pp')$, provided that the above series converges.
We also note that $L_x^q$ with $q\ge 2$ is a Banach function space with finite cotype, then $\Phi\in \gamma(\hh;L_x^q)$ if and only if $(\sum_{k=1}^\infty (\Phi h_k)^2)^{1/2}$ belongs to $L_x^q$ for any orthonormal basis $\{h_k\}_{k=1}^\infty$ of $\hh$; see \cite[Lemma 2.1]{NVW08(JFA)}.
Moreover, in this situation, 
\begin{align}\label{cotype}
\|\Phi\|^2_{\gamma(\hh;L_x^q)}
&\simeq \bigg\|\sum_{k=1}^\infty (\Phi h_k)^2\bigg\|_{L_x^\frac q2},
\quad \Phi\in \gamma(\hh;L_x^q).
\end{align}

\subsection{Ornstein--Uhlenbeck Process}

Recall that a predictable stochastic process $u:[0,T]\times \Omega\rightarrow \hh$  is called a mild solution of Eq. \eqref{ac} if $u\in L^\infty(0,T; \hh)$ a.s.
and it holds a.s. that 
\begin{align} \label{mild}
u(t)=S(t)u_0+\int_0^t S(t-r) F(u(r)){\rm d}r+W_A(t),
\quad t\in [0,T],
\end{align}
where $\{W_A(t):=\int_0^t S(t-r) {\rm d}W_{\hh}(r):\ t\in [0,T]\}$ is the so-called Ornstein--Uhlenbeck process.
The uniqueness of the mild solution of Eq. \eqref{ac} is understood in the sense of stochastic equivalence.
Set $z(t):=u(t)-W_A(t)$, $t\in [0,T]$.
Due to the additive nature, it is clear that $u$ is the unique mild solution of Eq. \eqref{ac} if and only if $z$ is the unique solution of the following random PDE:
\begin{align}\label{z} 
\dot z(t)=A z(t)+F(z(t)+W_A(t)), \quad t\in [0,T]; \quad
z(0)=u_0.
\end{align}

We begin with the following sharp H\"older regularity, and in particular the 
$L_\omega^p L_t^\infty L_x^\infty$-estimation, of the Ornstein--Uhlenbeck process $W_A$.
Our main tool is the following factorization formula, which is valid by stochastic Fubini theorem:
\begin{align*}
\int_0^t S(t-r) {\rm d}W_{\hh}(r)
=\frac{\sin(\pi \alpha)}{\pi} \int_0^t (t-r)^{\alpha-1} S(t-r) W_\alpha(r) {\rm d}r,
\end{align*}
where $\alpha\in (0,1)$ and
$W_\alpha(t):=\int_0^t (t-r)^{-\alpha} S(t-r) {\rm d}W_{\hh}(r)$,
$t\in [0,T]$.
It is known that, when $p>1$ and $1/p<\alpha<1$, the linear operator $R_\alpha$ defined by 
\begin{align*}
R_\alpha f(t)
:=\int_0^t (t-r)^{\alpha-1} S(t-r) f(r) {\rm d} r,\quad t\in [0,T],
\end{align*}
is bounded from $L^p(0,T; L_x^q)$ to $\CC^{\delta}([0,T]; \mathbb W_x^{\theta,q})$ for any $q\ge 2$ with $\delta<\alpha-1/p$ when $\theta=0$ or $\delta=\alpha-1/p-\theta/2$ when 
$\theta>0$ and $\alpha>\theta/2+1/p$;
see, e.g., \cite[Proposition 5.14]{DZ14} or \cite[Proposition 4.1]{HHL17}.

\begin{lm}\label{ou}
For any $p\ge 1$,
$W_A \in L^p(\Omega; \CC^{\delta}([0,T]; \CC^\kappa))$ for any $\delta,\kappa\ge 0$ with 
$\delta+\kappa/2<1/4$.
In particular, there exists a constant $C=C(p)$ such that 
\begin{align} \label{reg-w}
\ee\Big[\sup_{t\in [0,T]} \| W_A(t) \|^p_{L_x^\infty}\Big]
\le C.
\end{align}
\end{lm}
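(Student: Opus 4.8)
The plan is to run the factorization method set up just above the statement. Writing $\alpha\in(0,1)$ and $c_\alpha:=\sin(\pi\alpha)/\pi$, we have $W_A=c_\alpha\,R_\alpha W_\alpha$, so the strategy is to bound $W_\alpha$ in $L^p(\Omega;L^p(0,T;L_x^q))$ with $q$ as large as needed, then apply the mapping property of $R_\alpha$ recalled before the statement, and finish with a one-dimensional Sobolev embedding $\mathbb W_x^{\theta,q}\hookrightarrow\CC^\kappa$.

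First I would fix the parameters. Given $\delta,\kappa\ge0$ with $\delta+\kappa/2<1/4$, choose $\varepsilon>0$ with $\delta+\kappa/2+\varepsilon<1/4$, set $\theta:=\kappa+\varepsilon$, pick $q\ge2$ with $1/q<\varepsilon$ (so that $\theta-1/q>\kappa$), pick $\alpha\in(\delta+\theta/2,\,1/4)$, and finally pick $p\ge2$ with $1/p<\alpha-\delta-\theta/2$; note this forces $\alpha>\theta/2+1/p$, as required for the $R_\alpha$ estimate. The case $1\le p<2$ will follow from the case $p=2$ because $\pp(\Omega)=1$.

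Next I would estimate $W_\alpha$. Since for each fixed $t$ the integrand $r\mapsto(t-r)^{-\alpha}S(t-r)$ is deterministic, the Burkholder--Davis--Gundy inequality in martingale type $2$ Banach spaces (cf. \eqref{bdg}) gives
\[
\big(\ee\|W_\alpha(t)\|_{L_x^q}^p\big)^{1/p}\le C\Big(\int_0^t(t-r)^{-2\alpha}\|S(t-r)\|_{\gamma(\hh,L_x^q)}^2\,{\rm d}r\Big)^{1/2}.
\]
The point that I expect to require the most care is the estimate
\[
\|S(s)\|_{\gamma(\hh,L_x^q)}^2\le C\,s^{-1/2},\qquad s\in(0,T],
\]
\emph{uniformly in $q\ge2$}: with $\{h_k\}$ the $L^2$-normalized Dirichlet eigenfunctions $\sqrt2\,\sin(k\pi\,\cdot\,)$, one has $\sum_{k\ge1}(S(s)h_k)^2(x)=2\sum_{k\ge1}e^{-2\pi^2k^2s}\sin^2(k\pi x)\le2\sum_{k\ge1}e^{-2\pi^2k^2s}\le C(1+s^{-1/2})$, and since $(0,1)$ has unit Lebesgue measure the $L_x^{q/2}$-norm of this constant function is the constant itself, so \eqref{cotype} yields the bound with no $q$-dependence. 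Consequently $\int_0^t(t-r)^{-2\alpha}\|S(t-r)\|_{\gamma(\hh,L_x^q)}^2\,{\rm d}r\le C\int_0^t s^{-2\alpha-1/2}\,{\rm d}s<\infty$ precisely because $\alpha<1/4$, the bound being uniform in $t\in[0,T]$; integrating in $t$ and applying Fubini gives $W_\alpha\in L^p(\Omega;L^p(0,T;L_x^q))$.

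Finally I would combine the pieces. By the boundedness of $R_\alpha:L^p(0,T;L_x^q)\to\CC^{\delta'}([0,T];\mathbb W_x^{\theta,q})$ recalled before the statement, valid here with $\delta':=\alpha-1/p-\theta/2\ge\delta$ since $\alpha>\theta/2+1/p$, it follows that $W_A=c_\alpha R_\alpha W_\alpha\in L^p(\Omega;\CC^{\delta'}([0,T];\mathbb W_x^{\theta,q}))$. Using $\CC^{\delta'}([0,T];\cdot)\hookrightarrow\CC^{\delta}([0,T];\cdot)$ together with the one-dimensional Sobolev embedding $\mathbb W_x^{\theta,q}\hookrightarrow\CC^\kappa$ (valid since $\theta-1/q>\kappa$; observe that $\delta+\kappa/2<1/4$ already forces $\kappa<1/2$, so no endpoint issue arises) we obtain $W_A\in L^p(\Omega;\CC^\delta([0,T];\CC^\kappa))$, which is the claim. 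The bound \eqref{reg-w} is then the special case $\delta=0$, $\kappa\in(0,1/2)$ arbitrary, combined with $\|\cdot\|_{L_x^\infty}\le\|\cdot\|_{\CC^\kappa}$. Apart from the uniform-in-$q$ control of $\|S(s)\|_{\gamma(\hh,L_x^q)}$, the argument is pure exponent bookkeeping.
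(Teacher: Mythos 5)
Your proof is correct and follows essentially the same route as the paper: the factorization $W_A=c_\alpha R_\alpha W_\alpha$, the Burkholder--Davis--Gundy inequality \eqref{bdg} together with \eqref{cotype} and the uniform boundedness of the sine eigenfunctions to get $\|S(s)\|^2_{\gamma(\hh,L_x^q)}\le Cs^{-1/2}$ (hence the restriction $\alpha<1/4$), then the $\CC^\delta([0,T];\mathbb W_x^{\theta,q})$ mapping property of $R_\alpha$ and the Sobolev embedding $\mathbb W_x^{\theta,q}\hookrightarrow\CC^\kappa$ for large $q$. The only difference is that you make the exponent bookkeeping (choice of $\varepsilon,\theta,q,\alpha,p$ and the reduction of $1\le p<2$ to large $p$) explicit, which the paper leaves implicit.
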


\begin{proof}
Let $p,q\ge 2$.
Applying Fubini theorem and the Burkholder--Davis--Gundy inequality \eqref{bdg} implies that
\begin{align*}
\big\| W_\alpha \big\|^p_{L_\omega^p L_t^p L_x^q}  
&=\int_0^T \ee\bigg[\bigg\| \int_0^t (t-r)^{-\alpha} S(t-r) {\rm d}W_{\hh}(r)\bigg\|_{L_x^q} ^p \bigg]  {\rm d}t  \\
&\le C \int_0^T \bigg( \int_0^t r^{-2\alpha} 
\|S(r)\|^2_{\gamma(\hh;L_x^q)} {\rm d} r \bigg)^\frac p2 {\rm d}t.
\end{align*}
By \eqref{cotype} and the uniform boundedness of $\{e_k=\sqrt 2\sin(k\pi \cdot)\}_{k\in \nn_+}$ (which vanishes on the boundary $0$ and $1$), we have  
\begin{align*}
\|S(t)\|^2_{\gamma(\hh;L_x^q)}
&\simeq \bigg\|\sum_{k=1}^\infty (S(t) e_k)^2\bigg\|_{L_x^\frac q2}
\le \sum_{k=1}^\infty e^{-2\lambda_j t} \|e_k\|^2_{L_x^q}
\le C t^{-\frac12},  \quad t\in (0,T],
\end{align*}
where we have used the elementary inequality $\sum_{k=1}^\infty e^{-2\lambda_j t} \le C t^{-\frac12}$.
Then 
\begin{align*}
\big\| W_\alpha \big\|_{L_\omega^p L_t^p  L_x^q} 
&\le C \bigg(\int_0^T \bigg( \int_0^t r^{-(2\alpha+\frac12)} {\rm d} r \bigg)^\frac p2 {\rm d}t\bigg)^\frac1p,
\end{align*}
which is finite if and only if $\alpha\in (0,1/4)$.
As a result of the H\"older continuity characterization, $W_A \in L^p(\Omega; \CC^{\delta}([0,T]; \mathbb W_x^{\theta,q}))$ for any $\delta,\theta\ge 0$ with 
$\delta+\theta/2<1/4$.
We conclude by the Sobolev embedding
$\mathbb W_x^{\theta,q} \hookrightarrow \CC^\kappa$ with 
$\kappa\in [0,\theta-1/q)$ and taking $q$ sufficiently large.
\end{proof}

\subsection{A Priori Moments' Estimation}

The existence of a unique mild solution of Eq. \eqref{mild} which belongs to $\CC([0,T]; \hh)\cap L^q((0,T)\times (0,1))$ a.s. under the conditions \eqref{F}-\eqref{con-F}, and thus Eq. \eqref{ac} under Assumption \ref{ap-f}, had been established in \cite[Theorem 7.17]{DZ14}.
In the following we give a priori estimation on the moments of this solution, which plays a key role in our analysis.
A weak moments' estimation had been given in \cite[Theorem
4.8]{Da04} for Eq. \eqref{ac} (with non-random initial datum) where $f$ is a polynomial  whose derivative is nonpositive perturbed by a linear function (see \cite[Hypothesis 4.4]{Da04}), i.e., $f(x)=\lambda x-\sum_{k=1}^K a_{2k+1} x^{2k+1}$, $x\in \rr$, with $\lambda\in \rr$ and $a_{2k+1}>0$, 
$k=1,\cdots,K\in \nn_+$.

\begin{prop}\label{moment} 
Let $p\ge 2$ and Assumption \ref{ap-f} hold.
Assume that $u_0\in L^p(\Omega; L_x^p)$.
Then Eq. \eqref{ac} exists a unique mild solution $u=\{u(t):\ t\in [0,T]\}$ which is in $L^p(\Omega; \CC(0,T; L_x^p))
\cap L^{p+q-2}(\Omega; L^{p+q-2}(0,T; L_x^{p+q-2}))$ such that 
\begin{align}\label{moment0}
\ee\Big[\sup_{t\in [0,T]} \|u(t)\|^p_{L^p_x} \Big]
+\int_0^T \ee\Big[ \|u(t)\|^{p+q-2}_{L^{p+q-2}_x} \Big] {\rm d}t 
\le C \Big(1+\ee\Big[\|u_0\|^p_{L^p_x}\Big] \Big).
\end{align}
\end{prop}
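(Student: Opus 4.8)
The plan is to work with the transformed random PDE \eqref{z} for $z(t)=u(t)-W_A(t)$ rather than with $u$ directly, since $z$ is pathwise differentiable and the additive noise has been absorbed into the forcing term $W_A$, which we already control in $L_\omega^p L_t^\infty L_x^\infty$ by Lemma \ref{ou}. First I would justify the a priori computation rigorously on a Galerkin (spectral) approximation $z_N$ of \eqref{z} and pass to the limit at the end; on each finite-dimensional level all the manipulations below are legitimate, and the uniform bounds obtained survive the limit by lower semicontinuity of norms. For the core estimate I would test \eqref{z} against $|z(t)|^{p-2}z(t)$ in $L_x^2$, i.e. compute $\frac{1}{p}\frac{\rm d}{{\rm d}t}\|z(t)\|_{L_x^p}^p$. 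The Laplacian term contributes $\langle A z, |z|^{p-2}z\rangle = -(p-1)\int_0^1 |z|^{p-2}|\partial_x z|^2\,{\rm d}x \le 0$, which we simply discard (this is where the Dirichlet boundary condition is used to kill the boundary term). The nonlinear term is handled by writing $f(z+W_A) = \big(f(z+W_A)-f(W_A)\big) + f(W_A)$ and applying the one-sided Lipschitz/dissipativity bound \eqref{con-f}: with $y = W_A$ the increment $\big(f(z+W_A)-f(W_A)\big)\cdot z \le b|z|^2 - L_f|z|^q$ pointwise, so that term contributes at most $b\|z(t)\|_{L_x^p}^p - L_f\int_0^1 |z|^{p+q-2}\,{\rm d}x$. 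The leftover piece $\int_0^1 f(W_A)\,|z|^{p-2}z\,{\rm d}x$ is estimated by \eqref{con-f1}, $|f(W_A)| \le C(1+|W_A|^{q-1})$, followed by Young's inequality $ab \le \varepsilon a^{q/(q-1)} + C_\varepsilon b^q$ splitting off $\int |z|^{p-1}\cdot |f(W_A)|$ so as to absorb $\varepsilon \int_0^1|z|^{p+q-2}\,{\rm d}x$ into the good term $-L_f\int_0^1|z|^{p+q-2}\,{\rm d}x$ and leaving a remainder bounded by $C(1 + \|W_A(t)\|_{L_x^\infty}^{(q-1)q})\cdot(1+\|z(t)\|_{L_x^p}^p)$ (the $\|z\|_{L_x^p}^p$ coming from another Young step on the lower-order powers of $|z|$).

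This yields a differential inequality of the shape
\begin{align*}
\frac{\rm d}{{\rm d}t}\|z(t)\|_{L_x^p}^p + c\,\|z(t)\|_{L_x^{p+q-2}}^{p+q-2}
\le C\big(1 + \|W_A(t)\|_{L_x^\infty}^{(q-1)q}\big)\big(1+\|z(t)\|_{L_x^p}^p\big)
\end{align*}
with $c = L_f - \varepsilon > 0$. Writing $\phi(t) := C(1+\|W_A(t)\|_{L_x^\infty}^{(q-1)q})$, Gr\"onwall's lemma gives $1+\|z(t)\|_{L_x^p}^p \le (1+\|u_0\|_{L_x^p}^p)\exp(\int_0^T \phi(s)\,{\rm d}s)$ pathwise, and integrating the inequality over $[0,T]$ also controls $\int_0^T \|z(s)\|_{L_x^{p+q-2}}^{p+q-2}\,{\rm d}s$ by the same right-hand side. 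Taking expectations then requires a bound on $\ee\big[\exp(\int_0^T \phi(s)\,{\rm d}s) \cdot (\text{stuff})\big]$; here one route is H\"older's inequality combined with the fact that $\int_0^T \|W_A(s)\|_{L_x^\infty}^r\,{\rm d}s$ has finite moments of every order (this follows from Lemma \ref{ou}, which gives $W_A \in L^{p'}(\Omega;\CC([0,T];L_x^\infty))$ for every $p' \ge 1$, hence $\ee[(\int_0^T\|W_A(s)\|_{L_x^\infty}^r\,{\rm d}s)^k] \le T^k\,\ee[\sup_{[0,T]}\|W_A\|_{L_x^\infty}^{rk}] < \infty$), and the independence-type structure is not needed — plain moment bounds suffice after an application of H\"older splitting $\exp(\cdot)$ off $(1+\|u_0\|_{L_x^p}^p)$. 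Since $u_0 \in L^p(\Omega;L_x^p)$ is only assumed with the exponent $p$, a mild subtlety is that one should keep $\|u_0\|_{L_x^p}^p$ intact (not raise it to a higher power), which forces the H\"older exponents so that the $u_0$-factor appears to the first power and all the excess integrability is loaded onto the $W_A$-factor; this is fine because $W_A$ has moments of all orders.

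Finally I would reconstruct $u = z + W_A$: the bound on $\ee[\sup_{t}\|z(t)\|_{L_x^p}^p]$ together with \eqref{reg-w} gives $\ee[\sup_t\|u(t)\|_{L_x^p}^p] \le C(1+\ee[\|u_0\|_{L_x^p}^p])$, and for the space-time term one uses $\|u\|_{L_x^{p+q-2}}^{p+q-2} \le C(\|z\|_{L_x^{p+q-2}}^{p+q-2} + \|W_A\|_{L_x^{p+q-2}}^{p+q-2})$ with $\|W_A\|_{L_x^{p+q-2}} \le \|W_A\|_{L_x^\infty}$, again controlled by Lemma \ref{ou}; continuity in time, $u \in L^p(\Omega;\CC(0,T;L_x^p))$, follows from continuity of $z$ (standard for the Galerkin limit of the parabolic equation \eqref{z} with the uniform bounds in hand) and of $W_A$ (from Lemma \ref{ou}). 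Existence and uniqueness of the mild solution in $\CC([0,T];\hh)\cap L^q((0,T)\times(0,1))$ is already cited from \cite[Theorem 7.17]{DZ14}, so the only new content is the a priori bound \eqref{moment0}; the identification of the Galerkin limit with that mild solution is routine. The main obstacle I anticipate is the moment bookkeeping in the Gr\"onwall step: one has to be careful that the exponential of the $L_x^\infty$-norm of $W_A$ is integrated against the initial datum in a way that respects the limited integrability of $u_0$, and that the Young-inequality constants $\varepsilon$ are chosen uniformly (independent of $N$ in the Galerkin scheme) so the $-L_f\int|z|^{p+q-2}$ term genuinely survives; everything else is a routine energy estimate.
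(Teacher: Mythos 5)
Your overall strategy (transform to the random PDE \eqref{z}, test with $|z|^{p-2}z$, discard the Laplacian term, split $f(z+W_A)=\bigl(f(z+W_A)-f(W_A)\bigr)+f(W_A)$, use \eqref{con-f} for the increment, then reconstruct $u=z+W_A$ via Lemma \ref{ou}) is the same as the paper's. The genuine gap is in your Gr\"onwall/expectation step. You organize the Young inequality so that the remainder is $C\bigl(1+\|W_A(t)\|_{L_x^\infty}^{(q-1)q}\bigr)\bigl(1+\|z(t)\|_{L_x^p}^p\bigr)$, i.e.\ a \emph{random} coefficient $\phi(t)$ multiplying $\|z\|_{L_x^p}^p$, and then you need $\ee\bigl[\exp\bigl(\int_0^T\phi(s)\,{\rm d}s\bigr)\cdot(\cdots)\bigr]<\infty$. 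Your justification --- H\"older plus the fact that $\int_0^T\|W_A(s)\|_{L_x^\infty}^{r}\,{\rm d}s$ has polynomial moments of every order --- does not give this: finiteness of all polynomial moments never implies finiteness of exponential moments. Worse, for the Gaussian process $W_A$ the required exponential moment is genuinely infinite whenever $(q-1)q>2$: the sup (and likewise super-quadratic time integrals) of a nondegenerate Gaussian process has Gaussian lower tail bounds, so $\ee\bigl[\exp\bigl(c\int_0^T\|W_A(s)\|_{L_x^\infty}^{(q-1)q}\,{\rm d}s\bigr)\bigr]=\infty$ for every $c>0$ once $(q-1)q>2$; for the Allen--Cahn case $q=4$ the exponent is $12$. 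So as written the expectation step fails, and the caveat you raise about the limited integrability of $u_0$ cannot be repaired by loading integrability onto the $W_A$-factor.

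The fix is exactly the bookkeeping the paper uses: never let $W_A$ appear as a multiplicative coefficient of $\|z\|_{L_x^p}^p$. Estimate the leftover term either by
$\int_0^1|f(W_A)|\,|z|^{p-1}\,{\rm d}x\le \varepsilon\|z\|_{L_x^{p+q-2}}^{p+q-2}+C_\varepsilon\bigl(1+\|W_A\|_{L_x^{p+q-2}}^{p+q-2}\bigr)$ (absorbing into the good $-L_f$ term), or more simply by $C\|z\|_{L_x^p}^p+C\|f(W_A)\|_{L_x^p}^p$ with a \emph{deterministic} constant $C$, so that the resulting inequality reads, as in \eqref{est-y},
\begin{align*}
\frac1p\|z(t)\|_{L_x^p}^p+c\int_0^t\|z(r)\|_{L_x^{p+q-2}}^{p+q-2}\,{\rm d}r
\le \frac1p\|u_0\|_{L_x^p}^p+C\int_0^t\|z(r)\|_{L_x^p}^p\,{\rm d}r
+C\int_0^t\bigl(1+\|W_A(r)\|_{L_x^\infty}^{p(q-1)}\bigr)\,{\rm d}r .
\end{align*}
Then the pathwise Gr\"onwall factor is the deterministic $e^{CT}$, taking $\ee\bigl[\sup_{t\in[0,T]}\cdot\bigr]$ only requires the polynomial moment bound \eqref{reg-w}, and the initial datum enters exactly with power $p$, which is all that $u_0\in L^p(\Omega;L_x^p)$ provides. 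With that modification (and your Galerkin regularization, which is a fine way to make the testing rigorous), the rest of your argument goes through and coincides with the paper's proof.
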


\begin{proof}
Let $t\in [0,T]$.
Testing both sides of Eq. \eqref{z} by $|z|^{p-2} z$ and integrating by parts yield that
\begin{align*}
& \frac1p \|z(t)\|^p_{L^p_x}
+(p-1) \int_0^t \< |z(r)|^{p-2}, |\nabla z(r)|^2 \> {\rm d}r \\
&=\frac1p \|u_0\|^p_{L^p_x}
+\int_0^t \<(F(u(r)), |z(r)|^{p-2} z(r)\> {\rm d}r.
\end{align*}
It follows from the condition \eqref{con-f} and Young inequality that
\begin{align*}
& \int_0^t \<(F(u(r)), |z(r)|^{p-2} z(r) \> {\rm d}r \\
&=\int_0^t \<(F(z(r)+W_A(r))-F(W_A(r)), |z(r)|^{p-2} z(r)\> {\rm d}r \\
&\quad -\int_0^t \< W_A(r), |z(r)|^{p-2} z(r)\> {\rm d}r \\
&\le C \int_0^t \Big(\|z(r)\|^p_{L^p_x}+\|W_A(r)\|^p_{L^p_x} \Big) {\rm d}r
-L_f \int_0^t \|z(r)\|^{p+q-2}_{L^{p+q-2}_x}  {\rm d}r.
\end{align*}
Thus we obtain
\begin{align}  \label{est-y}
& \frac1p \|z(t)\|^p_{L^p_x}
+L_f \int_0^t \|z(r)\|^{p+q-2}_{L^{p+q-2}_x} {\rm d}r  \nonumber \\
&\le \frac1p \|u_0\|^p_{L^p_x}
+C \int_0^t \|z(r)\|^p_{L^p_x} {\rm d}r
+C \int_0^t \|W_A(r)\|^p_{L^p_x} {\rm d}r.
\end{align}
Now taking $L^{p/p}_\omega L^\infty_t$-norm, we conclude from the estimation \eqref{reg-w} and H\"older and Gr\"onwall inequalities that
\begin{align*}
\ee\Big[\sup_{t\in [0,T]} \|z(t)\|^p_{L^p_x} \Big]
+\ee\bigg[\int_0^T \|z(t)\|^{p+q-2}_{L^{p+q-2}_x} {\rm d}t \bigg]
\le C \Big(1+\ee\Big[\|u_0\|^p_{L^p_x}\Big] \Big).
\end{align*}
This estimation, in combination with the fact that $u=z+W_A$ and the estimation \eqref{reg-w}, shows \eqref{moment0}.
\end{proof}

\begin{rk} 
Using the arguments in Proposition \ref{moment}, one can also show the 
$L^p(\Omega; \CC(0,T; L_x^\rho))\cap L^{p(\rho+q-2)/\rho}(\Omega; L^{\rho+q-2}(0,T; L_x^{\rho+q-2}))$-well-posedness of Eq. \eqref{ac} for any $p\ge \rho \ge 2$, provided $u_0\in L^p(\Omega; L_x^\rho)$ and Assumption \ref{ap-f} hold.
Moreover, the following estimation holds true:
\begin{align*}
\ee\Big[\sup_{t\in [0,T]} \|u(t)\|^p_{L^\rho_x} \Big]
+\ee\bigg[\bigg(\int_0^T \|u(t)\|^{\rho+q-2}_{L^{\rho+q-2}_x} {\rm d}t\bigg)^\frac p\rho \bigg]
\le C \Big(1+\ee\Big[\|u_0\|^p_{L^\rho_x}\Big] \Big).
\end{align*}
\end{rk}

\section{Wong--Zakai--Galerkin Approximations}
\label{sec3}

This section is devoted to establish the optimal strong convergence rate for Wong--Zakai type approximations.

Let $m,n\in \nn_+$.
Let $\{I_i:=(t_i,t_{i+1}]:\ i=0,1,\cdots,m-1\}$ be an equal length subdivision of the time interval $(0,T]$, and $\PP_n$ denote the orthogonal projection from $\hh$ to its finite dimensional subspace $V_n:=\text{span}\{e_1,e_2,\cdots,e_n\}$, where $\{e_k(\cdot)=\sqrt 2\sin(k\pi \cdot)\}_{k=1}^n$ are the $n$ eigenvectors corresponding to the first eigenvalues 
$\{\lambda_j=(k\pi)^2\}_{k=1}^n$ of the Dirichlet Laplacian $A$.

Let $\beta_k^m$ be the piecewise linear approximation
\begin{align} \label{wz}
\beta_k^m(t)=\beta_k(t_i)+\frac mT (\beta_k(t_{i+1})-\beta_k(t_i))(t-t_i),
\quad t \in I_i,
\end{align}
with initial datum $\beta_k^m(0)=0$, $i=0,1,\cdots,m-1$.
Since $W_{\hh}$ can be formally represented as \eqref{wiener}, the resulting approximation of $W_{\hh}$ can be formally given by
\begin{align} \label{wz}
W_{\hh}^m(t)=W_{\hh}(t_i)+\frac mT (W_{\hh}(t_{i+1})-W_{\hh}(t_i))(t-t_i),
\quad  t \in I_i.
\end{align}

Denote by $u^{m,n}$ the mild solution of 
\begin{align} \label{wz}
 {\rm d}u^{m,n}(t)=(Au^{m,n}(t)+F(u^{m,n}(t))) {\rm d}t
+\PP_n {\rm d}W_{\hh}^m(t), \ t\in (0,T];   \quad 
u^{m,n}(0)=u_0.
\end{align}
Then the related approximate Ornstein--Uhlenbeck process is 
\begin{align*}
W_A^{m,n}(t):=\int_0^t S(t-r)\PP_n {\rm d} W_{\hh}^m(r),
\quad t\in (0,T].
\end{align*}
Since $W^m$ is piecewise linear and therefore of bounded variation, $W_A^{m,n}$ is indeed a classical Riemann-Stieltjes integral whose sample paths can be simulated:
\begin{align} \label{ou-mn}
W_A^{m,n}(t)
=\frac mT \sum_{i=0}^{m-1} \int_{I_i} \bigg[S(t-r) \PP_n \int_{I_i} 
 {\rm d}W_{\hh}(\tau) \bigg] {\rm d}r,
\quad t \in [0,T]. 
\end{align}
Here and in the rest part of the paper we set $S(t-r)=0$ for any $0\le t<r\le T$ to lighten the notations.

We note that such simulation method had been studied in \cite[Lemma 2.2]{GKN02(CMP)} where the authors derived strong convergence in $L_\omega^p L_{t,x}^\infty$-norm for any $p\ge 1$ but without any algebraic rate: 
\begin{align*}
\lim_{m,n\rightarrow \infty} \|W_A-W_A^{m,n}\|_{L_\omega^p L_{t,x}^\infty}=0.
\end{align*}
The following result shows the strong error estimation, between the exact and approximate Ornstein--Uhlenbeck processes, under a weak $L_t^\infty L_\omega^p L_x^q$-norm for any $p\ge 1$ and $q\ge 1$.

\begin{tm} \label{ou-err}
Let $p\ge 1$ and $\rho \ge 1$. 
There exists a constant $C=C(p,\rho)$ such that 
\begin{align} \label{ou-err0} 
\|W_A-W_A^{m,n}\|_{L_t^\infty L_\omega^p L_x^\rho}
\le C \Big[\Big(\frac1m\Big)^\frac14 \wedge \Big(\frac1n\Big)^\frac12 \Big].
\end{align}
\end{tm}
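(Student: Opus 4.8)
The plan is to split the error $W_A - W_A^{m,n}$ into a temporal Wong--Zakai part and a spatial Galerkin part,
\begin{align*}
W_A(t) - W_A^{m,n}(t)
= \big(W_A(t) - W_A^m(t)\big) + \big(W_A^m(t) - W_A^{m,n}(t)\big),
\end{align*}
where $W_A^m(t) := \int_0^t S(t-r)\,{\rm d}W_{\hh}^m(r)$ is the Wong--Zakai Ornstein--Uhlenbeck process without projection. For the second (Galerkin) piece, $W_A^m - W_A^{m,n} = \int_0^t S(t-r)(I-\PP_n)\,{\rm d}W_{\hh}^m(r)$; since $(I-\PP_n)$ commutes with $S(t-r)$ and $\|(I-\PP_n)(-A)^{-1/2}\|_{\mathcal L(\hh)} \le C n^{-1}$, one gains a factor $n^{-1}$ against one application of $(-A)^{1/2}$, and the remaining $(-A)^{1/2}S(t-r)$ is integrable in $r$ up to order $1/2$ in the relevant Gaussian/BDG estimate; this should yield the $(1/n)^{1/2}$ bound after the same factorization computation as in Lemma \ref{ou}. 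Alternatively one writes $W_A^m - W_A^{m,n}$ directly via the Riemann--Stieltjes representation \eqref{ou-mn} and estimates it in $L_\omega^p L_x^\rho$ by Minkowski, reducing the increment $\int_{I_i}{\rm d}W_{\hh}$ to a Gaussian with explicit variance.

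For the temporal piece $W_A - W_A^m$, the main point is to rewrite it using integration by parts (legitimate since $W_{\hh}^m$ has bounded variation and $W_{\hh}^m(0)=0$, $W_{\hh}^m(t_i)=W_{\hh}(t_i)$) so that both $W_A(t)$ and $W_A^m(t)$ are expressed through $W_{\hh}$. Concretely, on each subinterval the integrator ${\rm d}W_{\hh}^m(r)$ equals $\frac mT(W_{\hh}(t_{i+1})-W_{\hh}(t_i))\,{\rm d}r$, so $W_A^m(t)$ becomes a deterministic-kernel average of the Brownian increments; subtracting $W_A(t)=\int_0^t S(t-r)\,{\rm d}W_{\hh}(r)$ and using the stochastic Fubini theorem, the difference can be written as $\int_0^t G_{m,t}(r)\,{\rm d}W_{\hh}(r)$ for an explicit operator-valued kernel $G_{m,t}$ that measures the discrepancy between $S(t-r)$ and its piecewise-averaged-in-time version (plus a boundary term from $[t_i,t]$ when $t\in I_i$). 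Then apply the Burkholder--Davis--Gundy inequality \eqref{bdg} (or, since everything is Gaussian, just compute the second moment and use Gaussian hypercontractivity to pass from $L_\omega^2$ to $L_\omega^p$) together with the cotype identity \eqref{cotype} and the eigenfunction estimate $\sum_k e^{-2\lambda_k t}\le Ct^{-1/2}$ already used in Lemma \ref{ou}. The kernel discrepancy contributes a factor controlled by the temporal oscillation of $S(\cdot)$ on intervals of length $T/m$, namely $\|(S(t-r)-S(t-r'))v\|$ for $|r-r'|\le T/m$, which by analyticity is bounded by $C\big((t-r')^{-1} (r-r')\big)^{\beta}\|v\|$ for $\beta\in[0,1]$ after interpolating with the uniform bound; choosing $\beta$ together with the available $r^{-1/2}$ Gaussian density and optimizing the integrability exponents should produce exactly the rate $(1/m)^{1/4}$. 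Finally, taking the supremum over $t\in[0,T]$ outside the $L_\omega^p L_x^\rho$-norm (the weak $L_t^\infty$ is placed last, so no maximal inequality is needed — just a uniform-in-$t$ bound on the deterministic kernel estimates) gives \eqref{ou-err0}, and the two rates are combined by $\min$.

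The hard part will be bookkeeping the kernel $G_{m,t}$ cleanly — in particular handling the ``current'' subinterval $[t_i,t]$ where the piecewise-linear interpolation has not yet caught up with $W_{\hh}$, and making sure the singularity of $(-A)^{\beta}S(t-r)$ at $r=t$ is integrated against the factor $(r-r')\le T/m$ in a way that does not lose the full $1/4$ power. A convenient device is to keep the factorization representation of Lemma \ref{ou} in play: write both $W_A$ and $W_A^m$ through $W_\alpha$ with $\alpha\in(0,1/4)$, so that the temporal error reduces to $R_\alpha$ applied to the difference between $W_\alpha$-type processes with kernels $(t-r)^{-\alpha}S(t-r)$ vs.\ their time-averaged analogues; the boundedness of $R_\alpha$ from $L_t^pL_x^q$ to $\CC^\delta_tL_x^q$ then absorbs the $t$-dependence automatically, and one is left to bound an $L_\omega^pL_t^pL_x^q$-norm of an explicit Gaussian integral whose variance carries the $(T/m)$ oscillation. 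One must check that for $q,\rho\ge 2$ everything closes with $q$ large (using $L_x^q\hookrightarrow L_x^\rho$ on the bounded interval), and then the cases $1\le p<2$ or $1\le \rho<2$ follow from Jensen/Hölder on the finite measure spaces $\Omega$ and $(0,1)$, so no new work is needed there.
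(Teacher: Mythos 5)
Your strategy is workable and genuinely different in route from the paper's. The paper never separates the two error sources: it keeps the single averaged kernel $S(t-r)-S(t-\tau)\PP_n$ from the representation \eqref{ou-mn}, reduces to even moments $p=\rho=2k$, uses the identity \eqref{claim} for independent centered increments together with \eqref{bdg} and \eqref{cotype}, and then imports the deterministic bound $\sum_i\Psi_k^i(t)\le 8m^{-2}(1-e^{-\lambda_j m^{-1}})/\lambda_j$ from \cite[Lemma 3.1]{CHL17(SINUM)}, which simultaneously produces the $m^{-1/2}\wedge n^{-1}$ factor for the variance. You instead split off the Galerkin error from the temporal Wong--Zakai error and exploit that, the integrand being deterministic, each piece is a centered Gaussian field, so Minkowski plus Gaussian hypercontractivity reduce everything to second moments; this is a legitimate and arguably more elementary substitute for the paper's even-moment computation, and your observation that the weak $L_t^\infty$ placement requires only a fixed-$t$ bound uniform in $t$ is exactly the point the paper also uses. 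The reduction of general $p,\rho\ge1$ to large exponents is the same as the paper's first step.

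Two cautions on the places where the rates are actually produced. First, the single interpolated bound $\|(S(t-r)-S(t-\tau))v\|\le C\big((t-\tau)^{-1}|r-\tau|\big)^{\beta}\|v\|$ paired with $\|(-A)^{\beta}S(s)\|^2_{\gamma(\hh,L_x^q)}\lesssim s^{-1/2-2\beta}$ forces $\beta<1/4$ if used globally, and at $\beta=1/4$ the time integral diverges logarithmically; the same endpoint issue affects the heuristic of trading $(I-\PP_n)$ for a full $(-A)^{-1/2}$, since $(-A)^{1/2}S$ is not square-$\gamma$-integrable. To get the exact exponents you must either split the time integral at the scale $T/m$ (resp.\ $\lambda_{n+1}^{-1}$), using the uniform $s^{-1/2}$ bound near $s=0$ and $\beta>1/4$ away from it, or sum mode by mode, e.g.\ $\sum_j\min(\lambda_j^{-1},\lambda_j m^{-2})+\sum_j\min(\lambda_j^{-1},m^{-1})\le Cm^{-1/2}$ and $\sum_{j>n}\lambda_j^{-1}\le Cn^{-1}$ -- which is precisely the content of the paper's $\Psi_k^i$ estimate; your ``explicit Gaussian variance'' alternative delivers this, but it is the decisive computation and is only sketched. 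Second, when you set up the kernel $G_{m,t}$, the Wiener integral must run to the endpoint $t_{i+1}$ of the current subinterval, not to $t$, since $W_A^{m}(t)$ involves the full increment of $W_{\hh}$ over $I_i$; this is harmless because the kernel is deterministic (Gaussianity and the isometry survive the lack of adaptedness), and it is what the paper's convention $S(t-r)=0$ for $r>t$ encodes. Finally, the proposed factorization detour through $W_\alpha$ is unnecessary here and would be counterproductive: it requires $\alpha<1/4$ strictly and so caps the extractable temporal rate below $1/4$, whereas the direct fixed-$t$ variance estimate does not.
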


\begin{proof}
Due to the monotonicity of the $L^p$-space with respect to $p$, to prove \eqref{ou-err0} for any $p\ge 1$ and $\rho \ge 1$ it suffices to show \eqref{ou-err0} for any $p=\rho=2k$ which is a even number.

Fix $t \in [0,T]$.
By stochastic Fubini theorem, the approximate Ornstein--Uhlenbeck process $W_A^{m,n}$ in \eqref{ou-mn} can be rewritten as 
\begin{align*}
W_A^{m,n}(t)
=\frac mT \sum_{i=0}^{m-1} \int_{I_i} \bigg[\int_{I_i} S(t-\tau) \PP_n
{\rm d}\tau \bigg] {\rm d}W_{\hh}(r).
\end{align*}
Then we have
\begin{align*}
& \ee\bigg[ \|W_A(t)-W_A^{m,n}(t)\|^{2k}_{L_x^{2k}}\bigg] \\
&= \ee\bigg[ \bigg\|\frac mT \sum_{i=0}^{m-1} \int_{I_i} \bigg[\int_{I_i} (S(t-r)-S(t-\tau)\PP_n) {\rm d}\tau \bigg] {\rm d}W_{\hh}(r) \bigg\|^{2k}_{L_x^{2k}} \bigg] \\
&=\Big(\frac mT \Big)^{2k} \int_0^1 
\ee\bigg[ \bigg| \sum_{i=0}^{m-1} \int_{I_i} \bigg[\int_{I_i} (S(t-r)-S(t-\tau) \PP_n ) 
{\rm d}\tau \bigg] {\rm d}W_{\hh}(r) \bigg|^{2k} \bigg] {\rm d}x.
\end{align*}

It is not difficult to show that 
\begin{align} \label{claim}
\ee\bigg[ \bigg| \sum_{i=0}^{m-1} a_i \bigg|^{2k}\bigg]
= \ee\bigg[ \bigg(\sum_{i=0}^{m-1} |a_i |^2 \bigg)^k \bigg],
\end{align}
for any independent centered random variable $a_i$, $i=0,1\cdots,m-1$.
Due to the independence of the Wiener integral in disjoint temporal intervals, we can use \eqref{claim} with $a_i=\int_{I_i} \big[\int_{I_i} (S(t-r)-S(t-\tau) \PP_n ) {\rm d}\tau \big] {\rm d}W_{\hh}(r)$, $i=0,1,\cdots,m-1$, and Minkovskii inequality to deduce that 
\begin{align*}
& \ee\bigg[ \|W_A(t)-W_A^{m,n}(t)\|^{2k}_{L_x^{2k}}\bigg] 
=\Big(\frac mT \Big)^{2k} \int_0^1 
\ee\bigg[ \bigg( \sum_{i=0}^{m-1} |a_i |^2 \bigg)^k \bigg] {\rm d}x  \\
&=\Big(\frac mT \Big)^{2k} 
\bigg\| \sum_{i=0}^{m-1} |a_i |^2 \bigg\|^k_{L_{\omega, x}^{k}} 
\le \Big(\frac mT \Big)^{2k} 
\bigg( \sum_{i=0}^{m-1} \|a_i \|^2_{L_{\omega, x}^{2k}} \bigg)^k.
\end{align*}
It follows from the Burkholder--Davis--Gundy inequality \eqref{bdg}, the estimation \eqref{cotype} and Minkovskii inequality that 
\begin{align*}
& \ee\bigg[ \|W_A(t)-W_A^{m,n}(t)\|^{2k}_{L_x^{2k}}\bigg] \\
&\le C m^{2k} 
\bigg(\sum_{i=0}^{m-1} \int_{I_i} \bigg\|\int_{I_i} (S(t-r)-S(t-\tau)\PP_n ) 
{\rm d}\tau \bigg\|^2_{\gamma(\hh,L_x^{2k})} {\rm d}r \bigg)^k \\
&\le C m^{2k} 
\bigg(\sum_{i=0}^{m-1} \int_{I_i} \sum_{j=1}^\infty \bigg\|\int_{I_i} (S(t-r)-S(t-\tau)\PP_n ) e_j {\rm d}\tau \bigg\|^2_{L_x^{2k}} {\rm d}r \bigg)^k.
\end{align*}
As a result of Minkovskii inequality and Fubini Theorem, we get  
\begin{align*}
\ee\bigg[ \|W_A(t)-W_A^{m,n}(t)\|^{2k}_{L_x^{2k}}\bigg] 
\le C m^{2k} 
\bigg(\sum_{j=n+1}^\infty \sum_{i=0}^{m-1} \Psi_k^i(t) \bigg)^k,
\end{align*}
where 
\begin{align*}
\Psi_k^i(t):=\int_{I_i} \left[\int_{I_i} 
\Big(\chi_{r<t} e^{-\lambda_j(t-r)}-\chi_{\tau<t} e^{-\lambda_j(t-\tau)}\Big)
d\tau \right]^2 {\rm d}r,
\end{align*}
for $t\in [0,T)$ and $i=0,1,\cdots,m-1$.
Here $\chi$ denotes the indicative function, i.e., $\chi_{r<t}=1$ when $r<t$ and vanishes otherwise.

If follows from \cite[Lemma 3.1]{CHL17(SINUM)} that 
\begin{align*}
\sum_{i=0}^{m-1} \Psi_k^i(t) 
\le 8 \Big(\frac1m\Big)^2 \frac{1-e^{\lambda_j m^{-1}}}{\lambda_j},
\end{align*}
from which we get 
\begin{align*}
\sum_{j=n+1}^\infty \sum_{i=0}^{m-1} \Psi_k^i(t) 
\le 8 \Big(\frac1m\Big)^2 \bigg(\sum_{j=n+1}^\infty 
\frac{1-e^{-\lambda_j m^{-1}}}{\lambda_j} \bigg) 
\le C \Big(\frac1m\Big)^2 
\Big[\Big(\frac1m\Big)^\frac12 \wedge \Big(\frac1n\Big)\Big].
\end{align*}
Collecting the above estimations, we conclude \eqref{ou-err0} with $p=\rho=2k$ being a even number and complete the proof
\end{proof}

\begin{rk}
The strong error estimation \eqref{ou-err} is optimal.
The temporal strong convergence rate $\OOO(m^{-1/4})$ under the 
$L_t^\infty L_{\omega,x}^2$-norm had been derived in \cite[(31) of Theorem 3.1]{CHL17(SINUM)} for white noise which is a fractional noise with Hurst parameter $H=1/2$.
To illustrate the optimality of the spatial convergence rate $\OOO(n^{-1/2})$, we use the elementary estimation $e^x\ge 1+x$ for any $x\ge 0$ to show the reverse estimation
\begin{align*}
& \ee\bigg[ \bigg\|\int_0^t S(t-r) {\rm d}W_{\hh}(r)-\int_0^t S(t-r) \PP_n {\rm d}W_{\hh}(r) \bigg\|^2 \bigg] \\
&=\sum_{j=n+1}^\infty \frac{1-e^{-2\lambda_j t}}{2\lambda_j}
\ge \frac t{2(1+2\pi^2 t)} \cdot \frac1n,
\quad t>0.
\end{align*}
\end{rk}

Now we can give the optimal strong convergence rate of the Wong--Zakai--Galerkin approximation \eqref{wz} for Eq. \eqref{ac}.

\begin{tm} \label{main}
Let $p_*\ge 2$, $u_0\in L^{p_*}(\Omega; L_x^{p_*})$ and Assumption \ref{ap-f} hold.
Let $u$ and $u^{m,n}$ be the solutions of Eq. \eqref{ac} and \eqref{wz}, respectively.
Then for any $p\in [1,\frac{p_*}{q-2}+1)$, there exists a constant $C=C(T,p,p_*,b,q,L_f, L_f')$ such that 
\begin{align} \label{main0} 
& \sup_{t\in [0,T]} \ee\Big[ \|u(t)-u^{m,n}(t)\|^p_{L_x^p} \Big]
+\int_0^T \ee\Big[ \|u(t)-u^{m,n}(t)\|^{p+q-2}_{L_x^{p+q-2}} \Big] {\rm d}t \nonumber \\
& \le C \Big(1+\|u_0\|^\frac{p_*p(q-2)}{p_*+q-2}_{L^{p_*}_{\omega,x}} \Big)
\Big[\Big(\frac1m\Big)^\frac14 \wedge \Big(\frac1n\Big)^\frac12 \Big]^p.
\end{align}
\end{tm}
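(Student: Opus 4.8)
The strategy is to subtract the two mild formulations and estimate the difference $e^{m,n}(t) := u(t) - u^{m,n}(t)$ by the same energy method used in Proposition~\ref{moment}, exploiting the additive and monotone structure. First I would introduce the remainder processes $z(t) = u(t) - W_A(t)$ and $z^{m,n}(t) = u^{m,n}(t) - W_A^{m,n}(t)$, which by the additive nature satisfy the random PDEs \eqref{z} and its analogue with $W_A$ replaced by $W_A^{m,n}$. Setting $v := z - z^{m,n}$, it solves, pathwise,
\begin{align*}
\dot v(t) = A v(t) + F(z(t)+W_A(t)) - F(z^{m,n}(t)+W_A^{m,n}(t)), \quad t\in[0,T]; \quad v(0)=0.
\end{align*}
Since $e^{m,n} = v + (W_A - W_A^{m,n})$, and the last term is controlled in $L_t^\infty L_\omega^p L_x^\rho$ by Theorem~\ref{ou-err}, it suffices to bound $v$ in the same norm (indeed with the extra $L^{p+q-2}$ space-time term).

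The core estimate is obtained by testing the equation for $v$ against $|v|^{p-2}v$ and integrating by parts, exactly as in Proposition~\ref{moment}: the Laplacian term gives a nonnegative contribution $(p-1)\int_0^t \langle |v(r)|^{p-2}, |\nabla v(r)|^2\rangle\,\mathrm{d}r$ which I discard, and the nonlinear term is split as
\begin{align*}
\langle F(z+W_A) - F(z^{m,n}+W_A^{m,n}), |v|^{p-2}v\rangle
= \langle F(u) - F(u^{m,n}), |v|^{p-2}v\rangle.
\end{align*}
Writing $u - u^{m,n} = v + (W_A - W_A^{m,n})$, I would add and subtract $F(u^{m,n} + v) = F(u - (W_A - W_A^{m,n}) + v)$... more cleanly: use the monotonicity bound \eqref{con-F}, which after writing $u - u^{m,n} = v + g$ with $g := W_A - W_A^{m,n}$ gives
\begin{align*}
\langle F(v + u^{m,n} + g) - F(u^{m,n}), |v|^{p-2} v\rangle
\le b\|v\|^p_{L^p_x} + C\|g\|^p_{L^p_x} - L_f \|v\|^{p+q-2}_{L^{p+q-2}_x} + (\text{cross terms}),
\end{align*}
where the cross terms, coming from the difference of arguments by $g$, are handled by \eqref{con-f'}/\eqref{con-f1} and Young's inequality: a typical term is $\int_0^t\langle |f'(\xi)|\,|g|, |v|^{p-1}\rangle$ with $|f'(\xi)| \le C(1 + |u|^{q-2} + |u^{m,n}|^{q-2} + |g|^{q-2})$, which one splits off by Young's inequality into a small multiple of $\|v\|^{p+q-2}_{L^{p+q-2}_x}$ (absorbed by the $L_f$-term) plus terms of the form $C\|g\|^{p+q-2}_{L^{p+q-2}_x}$, $C\|u\|^{p+q-2}_{L^{p+q-2}_x}\|g\|^{?}$, etc. Then Grönwall in time and taking $L_\omega^{p/p} L_t^\infty$-norm yields
\begin{align*}
\ee\Big[\sup_{t\in[0,T]}\|v(t)\|^p_{L^p_x}\Big] + \int_0^T \ee\big[\|v(t)\|^{p+q-2}_{L^{p+q-2}_x}\big]\,\mathrm{d}t
\le C\,\ee\Big[\sup_{t\in[0,T]}\|g(t)\|^p_{L^p_x} + \int_0^T \|g(t)\|^{p+q-2}_{L^{p+q-2}_x}\,\mathrm{d}t + R\Big],
\end{align*}
where $R$ collects the mixed terms involving both $g$ and the solutions $u, u^{m,n}, W_A, W_A^{m,n}$.

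The main obstacle — and the reason for the restriction $p < \frac{p_*}{q-2}+1$ — is controlling these mixed higher-order terms. The worst one is of the form $\int_0^T \ee[\|u(r)\|_{L^{s}_x}^{q-2}\,\|g(r)\|_{L^{s}_x}\,\|v(r)\|^{p-1}_{L^{s}_x}]$-type expressions arising from the mean-value expansion of $F$; after Young's inequality one is left needing $\ee\big[\|u(r)\|^{\alpha}_{L^{\alpha}_x}\,\|g(r)\|^{\beta}\big]$ finite with $\alpha = p+q-2$ and, via Hölder in $\Omega$, this forces $u \in L^{p_*}_\omega L^{p+q-2}_x$ with $p_* \ge (p-1)(q-2)+\ldots$, i.e. precisely $p \le \frac{p_*}{q-2}+1$; this is where Proposition~\ref{moment} (and the Remark following it, giving the $L^{p_*}_\omega L^{\rho}_x$ bound for $\rho$ up to $p+q-2$) is invoked, together with Lemma~\ref{ou} for the $L^\infty$-bound on $W_A$ and Theorem~\ref{ou-err} for all the norms of $g = W_A - W_A^{m,n}$. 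The final bound \eqref{main0} then follows by estimating every occurrence of $g$ through \eqref{ou-err0}, noting that the rate $[(1/m)^{1/4}\wedge(1/n)^{1/2}]^p$ is the weakest appearing (the $\|g\|^{p+q-2}$ terms carry a higher power of the same small quantity and are absorbed), and collecting the $u_0$-dependence from Proposition~\ref{moment} into the stated power $\frac{p_* p(q-2)}{p_*+q-2}$ via Hölder's inequality in $\Omega$.
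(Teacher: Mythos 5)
Your plan follows essentially the same route as the paper: pass to the random PDEs by subtracting the exact and approximate Ornstein--Uhlenbeck processes, test $v=z-z^{m,n}$ against $|v|^{p-2}v$, split the nonlinearity into a mean-value part carrying $g=W_A-W_A^{m,n}$ and a monotone part absorbed via \eqref{con-F}, then use H\"older in $(t,\omega,x)$ with the moments from Proposition \ref{moment} (the source of the restriction $p<\frac{p_*}{q-2}+1$), Gr\"onwall, and Theorem \ref{ou-err} for the rate, exactly as in the paper's proof. One caution: your displayed monotonicity inequality is only legitimate after first adding and subtracting $F(z+W_A^{m,n})=F(u^{m,n}+v)$ so that the monotone piece has argument difference exactly $v$ (matching the test function), and in the mean-value piece the factor should be kept as $C\big(1+|z|^{q-2}+|W_A|^{q-2}+|W_A^{m,n}|^{q-2}\big)$ rather than involving $|u^{m,n}|^{q-2}$, which would otherwise require uniform-in-$(m,n)$ moments of the approximate solution that you have not established --- this is precisely how the paper arranges the splitting.
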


\begin{proof}
Define $z^{m,n}:=u^{m,n}-W_A^{m,n}$.
Then $z^{m,n}$ satisfies 
 \begin{align*}
\dot z^{m,n}(t)=A z^{m,n}(t)+F(z^{m,n}(t)+W_A^{m,n}(t)),\ t\in (0,T]; \   
z^{m,n}(0)=u_0. 
\end{align*}
Let $t\in (0,T]$ and denote by $e^{m,n}(t):=z(t)-z^{m,n}(t)$.
Then 
\begin{align} \label{eq-emn}
\begin{split}
& \dot e^{m,n}(t)=A e^{m,n}(t)+F(z(t)+W_A(t))-F(z^{m,n}(t)+W_A^{m,n}(t)),~ t\in (0,T]; \\
& e^{m,n}(0)=0.
\end{split}
\end{align}

Testing both sides of Eq. \eqref{eq-emn} by $|e^{m,n}(t)|^{p-2} e^{m,n}(t)$ and integrating by parts, similarly to the proof of Proposition \ref{moment}, yield that
\begin{align}  \label{eq-emn1} 
& \frac1p \|e^{m,n}(t)\|^p_{L_x^p}
+(p-1) \int_0^t \<|e^{m,n}|^{p-2}, |\nabla e^{m,n}|^2 \> {\rm d}r \nonumber \\
&=\int_0^t \<F(z+W_A)-F(z+W^{m,n}_A), |e^{m,n}|^{p-2} e^{m,n}\> {\rm d}r 
 \nonumber \\
&\quad +\int_0^t \<F(z+W^{m,n}_A)-F(z^{m,n}+W^{m,n}_A), |e^{m,n}|^{p-2} e^{m,n}\> {\rm d}r.
\end{align}
Using mean value theorem and the conditions \eqref{con-f}-\eqref{con-f'}, and applying Young and H\"older inequalities, we can bound the two terms in the righthand side of the above equality by 
\begin{align*}
&\le C \int_0^t \|F(z+W_A)-F(z+W^{m,n}_A)\|_{L_x^p}^p {\rm d}r
+C \int_0^t \|e^{m,n}\|_{L_x^p}^p {\rm d}r 
\nonumber \\
&\quad -L_f \int_0^t \|e^{m,n}\|_{L_x^{p+q-2}}^{p+q-2} {\rm d}r
\nonumber  \\
&\le C \int_0^t \Big\| \Big(1+|z|^{q-2}+|W_A|^{q-2}+|W_A^{m,n}|^{q-2} \Big) 
|W_A-W^{m,n}_A| \Big \|_{L_x^p}^p {\rm d}r \nonumber  \\
&\quad +C \int_0^t \|e^{m,n}\|_{L_x^p}^p {\rm d}r
-\frac{L_f}2 \int_0^t \|e^{m,n}\|_{L_x^{p+q-2}}^{p+q-2}{\rm d}r.
\end{align*}

Now taking $L^1_\omega L^\infty_t$-norm on both sides of \eqref{eq-emn1}, we have
\begin{align*}
& \ee\Big[\sup_{t\in [0,T]} \|e^{m,n}(t)\|^p_{L^p_x} \Big]
+\int_0^T \ee\Big[ \|e^{m,n}(t)\|^{p+q-2}_{L^{p+q-2}_x} \Big] {\rm d}t 
\\
&\le \Big\| \Big(1+|z|^{q-2}+|W_A|^{q-2}+|W_A^{m,n}|^{q-2} \Big) 
|W_A-W^{m,n}_A| \Big \|_{L_{t,\omega,x}^p}^p   \\
&\quad +C \int_0^T \ee\Big[ \|e^{m,n}(t)\|_{L_x^p}^p \Big] {\rm d}t.
\end{align*}

Let $\epsilon>0$ and denote $p_\epsilon:=\frac{p(p+\epsilon)}\epsilon$ such that $\frac1{p+\epsilon}+\frac1{p_\epsilon}=\frac1p$.
By H\"older inequality, we get 
\begin{align*}
& \Big\| \Big(1+|z|^{q-2}+|W_A|^{q-2}+|W^{m,n}|^{q-2} \Big) 
|W_A-W^{m,n}_A| \Big \|_{L_{t,\omega,x}^p}^p \\
&\le \big\|W_A-W^{m,n}_A \big\|^p_{L_{t,\omega,x}^{p_\epsilon}}
\cdot \Big(1+\|z\|^{p(q-2)}_{L_{t,\omega,x}^{(p+\epsilon)(q-2)}}
+\|W_A\|^{p(q-2)}_{L_{t,\omega,x}^{(p+\epsilon)(q-2)}}
+\|W^{m,n}\|^{p(q-2)}_{L_{t,\omega,x}^{(p+\epsilon)(q-2)}} \Big).
\end{align*}
Since $p\in [1,\frac{p_*}{q-2}+1)$, one can choose 
$0<\epsilon<\frac{p_*}{q-2}+1-p$ such that
$(p+\epsilon)(q-2)<p_*+q-2$.
It follows that 
\begin{align} \label{emn}
& \ee\Big[\sup_{t\in [0,T]} \|e^{m,n}(t)\|^p_{L^p_x} \Big]
+\int_0^T \ee\Big[ \|e^{m,n}(t)\|^{p+q-2}_{L^{p+q-2}_x} \Big] {\rm d}t  \nonumber \\
&\le C \int_0^T \ee\Big[ \|e^{m,n}(t)\|_{L_x^p}^p \Big] {\rm d}t 
+C \|W_A-W^{m,n}_A\|^p_{L_{t,\omega,x}^{p_\epsilon}} \nonumber  \\
&\qquad  \times \Big(1+\|z\|^{p(q-2)}_{L_{t,\omega,x}^{p_*+q-2}}
+\|W_A\|^{p(q-2)}_{L_{t,\omega,x}^{p_*+q-2}}
+\|W^{m,n}\|^{p(q-2)}_{L_{t,\omega,x}^{p_*+q-2}} \Big).
\end{align}
The error estimation of $W_A$ and $W_A^{m,n}$ in Theorem \ref{ou-err}, combining with the regularity of $W_A$ in Lemma \ref{ou} and the estimation \eqref{moment0}, ensures that
\begin{align*}
\|z\|^{p(q-2)}_{L_{t,\omega,x}^{p_*+q-2}}
+\|W_A\|^{p(q-2)}_{L_{t,\omega,x}^{p_*+q-2}}
+\|W^{m,n}\|^{p(q-2)}_{L_{t,\omega,x}^{p_*+q-2}}
\le C \Big(1+\|u_0\|^\frac{p_*p(q-2)}{p_*+q-2}_{L^{p_*}_{\omega,x}} \Big).  
\end{align*}
Substituting the above estimation into \eqref{emn}, we obtain
\begin{align*}
& \ee\Big[\sup_{t\in [0,T]} \|e^{m,n}(t)\|^p_{L^p_x} \Big]
+\int_0^T \ee\Big[ \|e^{m,n}(t)\|^{p+q-2}_{L^{p+q-2}_x} \Big] {\rm d}t  \nonumber \\
&\le C \Big(1+\|u_0\|^\frac{p_*p(q-2)}{p_*+q-2}_{L^{p_*}_{\omega,x}}  \Big)
\big\|W_A-W^{m,n}_A \big\|^p_{L_{t,\omega,x}^{p_\epsilon}}.
\end{align*}
It follows from the relations $u=z+W_A$ and $u^{m,n}=z^{m,n}+W_A^{m,n}$ and triangle inequality that 
\begin{align*}  
& \sup_{t\in [0,T]} \ee\Big[ \|u(t)-u^{m,n}(t)\|^p_{L^p_x} \Big]
+\int_0^T \ee\Big[ \|u(t)-u^{m,n}(t)\|^{p+q-2}_{L^{p+q-2}_x} \Big] {\rm d}t   \\
& \le \|e^{m,n}\|^p_{L_t^\infty L_{\omega,x}^p} 
+\|e^{m,n}\|^{p+q-2}_{L_{t,\omega,x}^{p+q-2}}
+\|W_A-W_A^{m,n}\|^p_{L_t^\infty L_{\omega,x}^p }
+\|W_A-W_A^{m,n}\|^{p+q-2}_{L_{t,\omega,x}^{p+q-2}} \\
&\le  C \Big(1+\|u_0\|^\frac{p_*p(q-2)}{p_*+q-2}_{L^{p_*}_{\omega,x}}\Big)
\Big(\Big\|W_A-W_A^{m,n} \Big\|^p_{L_t^\infty L_{\omega,x}^{p_\epsilon}}
+\Big\|W_A-W_A^{m,n} \Big\|^{p+q-2}_{L_t^\infty 
L_{\omega,x}^{p+q-2}} \Big).   
\end{align*}
Applying Theorem \ref{ou-err}, we get \eqref{main0}.
\end{proof}

\begin{rk} \label{rk-ac}
In the case of stochastic Allen--Cahn equation, i.e., Eq. \eqref{ac} with $f(x)=x-x^3$ for $x\in \rr$, then Assumption \ref{ap-f} holds with $q=4$.
Applying the estimation \eqref{main0} of Theorem \ref{main}, the Wong--Zakai--Galerkin approximation \eqref{wz} applied to this equation possesses the strong convergence rate 
\begin{align*} 
& \sup_{t\in [0,T]} \ee\Big[ \|u(t)-u^{m,n}(t)\|^p_{L_x^p} \Big]
+\int_0^T \ee\Big[ \|u(t)-u^{m,n}(t)\|^{p+2}_{L_x^{p+2}} \Big] {\rm d}t \nonumber \\
& \le C \Big(1+\|u_0\|^\frac{2p_*p}{p_*+2}_{L^{p_*}_{\omega,x}} \Big) \Big[\Big(\frac1m\Big)^\frac p4 \wedge \Big(\frac1n\Big)^\frac p2 \Big],
\end{align*}
for any $2\le p<\frac{p_*}2+1$ provided that $u_0\in L^{p_*}(\Omega; L_x^{p_*})$.
\end{rk}

One can also use a modified argument as in Theorem \ref{main} to derive a strong convergence rate which might not optimal when $q>2$ under minimal assumptions on the initial datum.

\begin{cor}  \label{rk-main}
Let $p\ge 2$, $u_0\in L^p(\Omega; L_x^p)$ and Assumption \ref{ap-f} hold.
Let $u$ and $u^{m,n}$ be the solutions of Eq. \eqref{ac} and \eqref{wz}, respectively.
Then there exists a constant $C=C(T,p,b,q,L_f, L_f')$ such that 
\begin{align} \label{main1} 
& \sup_{t\in [0,T]} \ee\Big[ \|u(t)-u^{m,n}(t)\|^p_{L_x^p} \Big]
+\int_0^T \ee\Big[ \|u(t)-u^{m,n}(t)\|^{p+q-2}_{L_x^{p+q-2}} \Big] {\rm d}t \nonumber \\
& \le C \Big(1+\ee\Big[\|u_0\|^p_{L^p_x}\Big] \Big)
\Big[\Big(\frac1m\Big)^\frac14 \wedge \Big(\frac1n\Big)^\frac12 \Big]^{\frac{p+q-2}{q-1}}.
\end{align}
\end{cor}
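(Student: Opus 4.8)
The argument is a variant of the proof of Theorem \ref{main}: we run the same energy estimate for the error process, but split the consistency term with a H\"older--Young exponent tuned so that the right-hand side only involves the moment of $z$ in $L^{p+q-2}_{t,\omega,x}$, which is exactly what Proposition \ref{moment} delivers under the minimal hypothesis $u_0\in L^p(\Omega;L^p_x)$. Set $z^{m,n}:=u^{m,n}-W_A^{m,n}$ and $e^{m,n}:=z-z^{m,n}$ as in Theorem \ref{main}, so that \eqref{eq-emn} holds. Testing \eqref{eq-emn} by $|e^{m,n}|^{p-2}e^{m,n}$ and integrating by parts (legitimate since $p\ge 2$) yields, exactly as in \eqref{eq-emn1}, an identity whose left-hand side contains $\frac1p\|e^{m,n}(t)\|_{L^p_x}^p$ plus a nonnegative gradient term. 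The monotone term $\<F(z+W_A^{m,n})-F(z^{m,n}+W_A^{m,n}),|e^{m,n}|^{p-2}e^{m,n}\>$ is bounded pointwise, using \eqref{con-f} multiplied by $|e^{m,n}|^{p-2}$, by $b\|e^{m,n}\|_{L^p_x}^p-L_f\|e^{m,n}\|_{L^{p+q-2}_x}^{p+q-2}$, while the mean value theorem together with \eqref{con-f'} and $q\ge 2$ gives $|F(z+W_A)-F(z+W_A^{m,n})|\le C g$ with
\[ g:=\big(1+|z|^{q-2}+|W_A|^{q-2}+|W_A^{m,n}|^{q-2}\big)\,|W_A-W_A^{m,n}| . \]

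The only change to the proof of Theorem \ref{main} is in how we bound $\int_0^1 g\,|e^{m,n}|^{p-1}\,{\rm d}x$: instead of the $L^p$--$L^p$ split we use $ab\le\epsilon\,a^{(p+q-2)/(p-1)}+C_\epsilon\,b^{(p+q-2)/(q-1)}$ with $a=|e^{m,n}|^{p-1}$. The first contribution reproduces $\epsilon\|e^{m,n}\|_{L^{p+q-2}_x}^{p+q-2}$, which is absorbed into the $L_f$-dissipation on the left for $\epsilon$ small, and the second contribution is $C_\epsilon\,\|g\|_{L^\sigma_x}^\sigma$ with $\sigma:=(p+q-2)/(q-1)$; note $1\le\sigma\le p$ since $p\ge 2$ and $q\ge 2$. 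Taking expectations, dropping the gradient term, and applying Gr\"onwall to $\int_0^t\ee[\|e^{m,n}(r)\|_{L^p_x}^p]\,{\rm d}r$ then gives
\[ \sup_{t\in[0,T]}\ee\big[\|e^{m,n}(t)\|_{L^p_x}^p\big]+\int_0^T\ee\big[\|e^{m,n}(t)\|_{L^{p+q-2}_x}^{p+q-2}\big]\,{\rm d}t\le C\,\|g\|_{L^\sigma_{t,\omega,x}}^\sigma . \]

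It remains to estimate $\|g\|_{L^\sigma_{t,\omega,x}}$. By H\"older's inequality in the joint variable $(t,\omega,x)$ with $\frac1\sigma=\frac{q-2}{p+q-2}+\frac1{p+q-2}$,
\[ \|g\|_{L^\sigma_{t,\omega,x}}\le\big(C+\|z\|_{L^{p+q-2}_{t,\omega,x}}^{q-2}+\|W_A\|_{L^{p+q-2}_{t,\omega,x}}^{q-2}+\|W_A^{m,n}\|_{L^{p+q-2}_{t,\omega,x}}^{q-2}\big)\,\big\|W_A-W_A^{m,n}\big\|_{L^{p+q-2}_{t,\omega,x}} . \]
Here $\|W_A\|_{L^{p+q-2}_{t,\omega,x}}\le C$ by Lemma \ref{ou}; $\|W_A^{m,n}\|_{L^{p+q-2}_{t,\omega,x}}\le C$ by the triangle inequality and Theorem \ref{ou-err}; and $\|z\|_{L^{p+q-2}_{t,\omega,x}}^{p+q-2}\le C(1+\ee[\|u_0\|_{L^p_x}^p])$ by Proposition \ref{moment} together with Lemma \ref{ou} (recall $z=u-W_A$). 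Since $\frac{(q-2)\sigma}{p+q-2}=\frac{q-2}{q-1}\le 1$, raising the first factor to the power $\sigma$ yields at most $C(1+\ee[\|u_0\|_{L^p_x}^p])$, and by Theorem \ref{ou-err} the last factor is $\le C[(1/m)^{1/4}\wedge(1/n)^{1/2}]$; hence $\|g\|_{L^\sigma_{t,\omega,x}}^\sigma\le C(1+\ee[\|u_0\|_{L^p_x}^p])\,[(1/m)^{1/4}\wedge(1/n)^{1/2}]^\sigma$. Finally $u-u^{m,n}=e^{m,n}+(W_A-W_A^{m,n})$, and since $\sigma\le p\le p+q-2$ and $(1/m)^{1/4}\wedge(1/n)^{1/2}\le 1$, both $\|W_A-W_A^{m,n}\|_{L^\infty_tL^p_{\omega,x}}^p$ and $\|W_A-W_A^{m,n}\|_{L^{p+q-2}_{t,\omega,x}}^{p+q-2}$ (bounded by Theorem \ref{ou-err}) are dominated by the same factor; combining the two displayed estimates gives \eqref{main1}.

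I do not expect a genuine obstacle here: the content is entirely in the modified Young split, which trades the optimal exponent $p$ of Theorem \ref{main} (whose use forces the strong assumption $u_0\in L^{p_*}(\Omega;L^{p_*}_x)$ with $p_*$ large, needed to control $z$ in high $L^p$-norms) for the smaller exponent $\sigma=(p+q-2)/(q-1)$, which is compatible with the minimal regularity $u_0\in L^p(\Omega;L^p_x)$; when $q=2$ one has $\sigma=p$ and the rate is again optimal, while for $q>2$ it need not be. The one point to keep an eye on is that $L_f>0$ in Assumption \ref{ap-f} genuinely allows the absorption of the $\epsilon\|e^{m,n}\|_{L^{p+q-2}_x}^{p+q-2}$ term; everything else parallels the proof of Theorem \ref{main}.
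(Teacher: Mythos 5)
Your proposal is correct and follows essentially the same route as the paper's own proof: the same Young split of the consistency term with the conjugate exponents $\frac{p+q-2}{p-1}$ and $\sigma=\frac{p+q-2}{q-1}$, absorption of the resulting $\epsilon\|e^{m,n}\|^{p+q-2}_{L_x^{p+q-2}}$ piece into the $L_f$-dissipation, and the H\"older/Young estimate of $\|g\|_{L^{\sigma}_{t,\omega,x}}$ via Proposition \ref{moment}, Lemma \ref{ou} and Theorem \ref{ou-err}, followed by Gr\"onwall and the triangle inequality for $u-u^{m,n}=e^{m,n}+(W_A-W_A^{m,n})$. The differences (how the H\"older exponents are arranged before invoking $\frac{q-2}{q-1}\le 1$) are purely cosmetic, so nothing further is needed.
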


\begin{proof}
One only need to modify the proof of Theorem \ref{main} by estimating the term
\begin{align*} 
\ee\bigg[\sup_{t\in [0,T]} \bigg|\int_0^t \<F(z+W_A)-F(z+W^{m,n}_A), |e^{m,n}|^{p-2} e^{m,n}\> {\rm d}r \bigg| \bigg].
\end{align*}
Using mean value theorem and the conditions \eqref{con-f}-\eqref{con-f'}, and applying Young inequality, we have 
\begin{align*}
& \ee\bigg[\sup_{t\in [0,T]} \bigg|\int_0^t \<F(z+W_A)-F(z+W^{m,n}_A), |e^{m,n}|^{p-2} e^{m,n}\> {\rm d}r \bigg| \bigg]  \\
&\le C \int_0^T \|F(z+W_A)-F(z+W^{m,n}_A)\|_{L_{\omega,x}^{\frac{p+q-2}{q-1}}}^{\frac{p+q-2}{q-1}} {\rm d}r
+\frac{L_f}2 \|e^{m,n}\|_{L_{t,\omega,x}^{p+q-2}}^{p+q-2}  \\
&\le C \Big\| \Big(1+|z|^{q-2}+|W_A|^{q-2}+|W_A^{m,n}|^{q-2} \Big) 
|W_A-W^{m,n}_A| \Big \|_{L_{t,\omega,x}^{\frac{p+q-2}{q-1}}}^{\frac{p+q-2}{q-1}} \\
&\quad +\frac{L_f}2 \|e^{m,n}\|_{L_{t,\omega,x}^{p+q-2}}^{p+q-2}. 
\end{align*}
By Young and H\"older inequalities and the estimation \eqref{moment0}, we have
\begin{align*}
& \Big\| \Big(1+|z|^{q-2}+|W_A|^{q-2}+|W_A^{m,n}|^{q-2} \Big) 
|W_A-W^{m,n}_A| \Big \|_{L_{t,\omega,x}^{\frac{p+q-2}{q-1}}}^{\frac{p+q-2}{q-1}}  \\
&\le C \big\|W_A-W^{m,n}_A \big\|^{\frac{p+q-2}{q-1}}_{L_{t,\omega,x}^{p+q-2}}
\Big(1+\|z\|^{p+q-2}_{L_{t,\omega,x}^{p+q-2}}
+\|W_A\|^{p+q-2}_{L_{t,\omega,x}^{p+q-2}}
+\|W^{m,n}\|^{p+q-2}_{L_{t,\omega,x}^{p+q-2}} \Big) \\
&\le C \Big(1+\ee\Big[\|u_0\|^p_{L^p_x}\Big] \Big) 
\big\|W_A-W^{m,n}_A \big\|^{\frac{p+q-2}{q-1}}_{L_{t,\omega,x}^{p+q-2}}.
\end{align*}
Consequently,
\begin{align*}
& \ee\bigg[\sup_{t\in [0,T]} \bigg|\int_0^t \<F(z+W_A)-F(z+W^{m,n}_A), |e^{m,n}|^{p-2} e^{m,n}\> {\rm d}r \bigg| \bigg]  \\
&\le C \Big(1+\ee\Big[\|u_0\|^p_{L^p_x}\Big] \Big) 
\big\|W_A-W^{m,n}_A \big\|^{\frac{p+q-2}{q-1}}_{L_{t,\omega,x}^{p+q-2}}
+\frac{L_f}2 \|e^{m,n}\|_{L_{t,\omega,x}^{p+q-2}}^{p+q-2}. 
\end{align*}
Taking $L^1_\omega L^\infty_t$-norm on both sides of \eqref{eq-emn1} and substituting into the above estimation, we have by Gr\"onwall inequality that 
\begin{align*}
& \ee\Big[\sup_{t\in [0,T]} \|e^{m,n}(t)\|^p_{L^p_x} \Big]
+\int_0^T \ee\Big[ \|e^{m,n}(t)\|^{p+q-2}_{L^{p+q-2}_x} \Big] {\rm d}t  \nonumber \\
&\le C \Big(1+\ee\Big[\|u_0\|^p_{L^p_x}\Big] \Big)
\big\|W_A-W^{m,n}_A \big\|^{\frac{p+q-2}{q-1}}_{L_{t,\omega,x}^{p+q-2}}.
\end{align*}
Following the proof of Theorem \ref{main}, we get 
\begin{align*}  
& \ee\Big[\sup_{t\in [0,T]} \|u(t)-u^{m,n}(t)\|^p_{L^p_x} \Big]
+\int_0^T \ee\Big[ \|u(t)-u^{m,n}(t)\|^{p+q-2}_{L^{p+q-2}_x} \Big] {\rm d}t   \\
&\le  C \Big(1+\ee\Big[\|u_0\|^p_{L^p_x}\Big] \Big)
\Big(\|W_A-W^{m,n}_A\|^{\frac{p+q-2}{q-1}}_{L_{t,\omega,x}^{p+q-2}}
+\|W_A-W_A^{m,n}\|^p_{L_t^\infty 
L_{\omega,x}^{p+q-2}} \Big).   
\end{align*}
Applying Theorem \ref{ou-err}, we get \eqref{main1}.
\end{proof}

\begin{rk}
The assumption on the initial datum, to derive a strong convergence rate between $u$ and $u^{m,n}$ under the $L_{\omega,x}^p$-norm, is minimal.
However, the convergence rate in Remark \ref{rk-main} is far from sharp, since $\frac{p+q-2}{q-1}\le p$ and the equality holds if and only if $q=2$ which reduces to the Lipschitz case.
\end{rk}

\bibliographystyle{amsalpha}
\bibliography{bib}

\end{document}